\documentclass{jgcc} 
\pdfoutput=1
\usepackage{lastpage}
\jgccdoi{16}{1}{7}{15972} 
\jgccheading{}{\pageref{LastPage}}{}{}{Apr.~7,~2025}{Jul.~2,~2025}{}    

\keywords{Andrews-Curtis conjecture; balanced presentations; torsion-free non-elementary hyperbolic group}

\usepackage{hyperref}
\theoremstyle{plain} 


\begin{document}

\title{Andrews-Curtis groups}

\author[R.~H.~Gilman]{Robert H. Gilman}	
\address{Stevens Institute of Technology, Castle Point on Hudson, Hoboken, NJ 07030, USA}	
\email{rgilman@stevens.edu}  

\author[A.~G.~Myasnikov]{Alexei G. Myasnikov}	
\address{Stevens Institute of Technology, Castle Point on Hudson, Hoboken, NJ 07030, USA}
\email{amiasnik@stevens.edu}
\thanks{The second author was supported by Shota Rustaveli National Science Foundation of Georgia under the project FR-21-4713.}





\begin{abstract}
For any group $G$ and integer $k\ge 2$ the Andrews-Curtis transformations act as a permutation group, termed the Andrews-Curtis group $AC_k(G)$, on the subset $N_k(G) \subset G^k$ of all $k$-tuples that generate $G$ as a normal subgroup (provided $N_k(G)$ is non-empty). The famous Andrews-Curtis Conjecture is that if $G$ is free of rank $k$, then $AC_k(G)$  acts transitively on $N_k(G)$.  The set $N_k(G)$ may have a rather complex structure, so it is easier to study the full Andrews-Curtis group $FAC(G)$ generated by AC-transformations on a much simpler set $G^k$. Our goal here is to investigate the natural epimorphism $\lambda\colon FAC_k(G) \to AC_k(G)$. We show that if $G$ is non-elementary torsion-free hyperbolic, then $FAC_k(G)$ acts faithfully on every nontrivial orbit of $G^k$, hence $\lambda\colon FAC_k(G) \to AC_k(G)$ is an isomorphism.
\end{abstract}

\maketitle

\hfill{\small \it In memory of Ben Fine}

\section{The Andrews-Curtis Conjecture}

Andrews-Curtis groups were introduced in connection with the Andrews-Curtis Conjecture (ACC) proposed by James J. Andrews and Morton L. Curtis in 1965 \cite{AC}. According to this conjecture a presentation $P$ which is balanced (the number of its relators equals the number of its generators) presents the trivial group if and only if $P$ can be reduced to the standard presentation of the trivial group by Andrews-Curtis transformations (defined below) of its sequence of relators. In other words $\langle x_1,\ldots, x_k \mid u_1,\ldots, u_k\rangle$ presents the trivial group if and only if $(u_1,\ldots, u_k)$ is AC-equivalent to $(x_1,\ldots, x_k)$.

Many people believe that the ACC is false. Indeed searching for counterexamples is currently an active area of research. In 1985, Akbulut and Kirby \cite{Akbulut} suggested a sequence of potential counterexamples to the ACC of rank 2:
\[
AK(n) =  \langle x,y| ~x^n = y^{n+1}, ~xyx=yxy\rangle, ~n \ge 2.
\]
The presentations $AK(n)$ are balanced  presentations of the trivial group, and it was  conjectured that
that the pair of relators $(x^ny^{-n-1},xyxy^{-1}x^{-1}y^{-1})$ is not AC-equivalent to the pair of generators$(x,y)$.  It turned out later that the presentation $AK(2)$  is AC-trivializable (see \cite{MAD,MMS}), so $AK(2)$ is not a counterexample to the ACC.  The question whether or not the presentations $AK(n)$ with $n >2$ are trivializable is still open despite an ongoing effort by the research community. Currently, $AK(3)$ is the shortest (in the total length of relators) potential counterexample to the ACC.  Indeed, it was proved in \cite{HR} that if $\langle x,y \mid  u= 1, v = 1 \rangle$ is a presentation of the trivial group with  $|u| + |v| \leq 13$ then either
$(u,v) \sim_{AC} (x,y)$
or
$(u,v) \sim_{AC} (x^3y^{-4}, xyxy^{-1}x^{-1}y^{-1})$.
See papers \cite{Metzler,BM,MAD,MMS,PU,P} 
for more details and some particular results.  It was shown in \cite{B} that AC-trivializations could be complex and exponentially long, so brute-force search algorithms  are not going to work easily. Recently, methods of Reinforcement Learning were used with success in search for AC-trivializations of several known balanced presentations of the trivial group, see \cite{Gukov};   as well as axiomatic theorem proving \cite{Lis}.

To construct a counterexample to the ACC one might study the group structure of AC-transformations in an arbitrary group $G$ and apply this knowledge to the ACC.  For example, if the presentation $AK(3)$ is not AC-trivializable in a group $G$, then it is not AC-trivializable in $F_2$, hence the ACC fails. Below we initiate a study of the group of AC-transformations of a  non-elementary torsion-free hyperbolic group.

\section {AC transformations}

AC transformations can be defined for any group $G$. The following are the \emph{elementary Andrews-Curtis transformations} (or AC-moves) on $G^k$, where $k\geq 2$ is a natural number and $G^k$ is the direct power of $k$ copies of $G$.

  \begin{enumerate}
 \item [($R_{ij}$)] $(u_1, \ldots, u_i, \ldots,  u_k) \longrightarrow (u_1,
\ldots,u_iu_j^{\pm 1}, \ldots,  u_k), \ i \ne j;$
  \item [($L_{ij}$)] $(u_1, \ldots, u_i, \ldots, u_k)   \longrightarrow  (u_1,
\ldots,u_j^{\pm 1}u_i, \ldots,  u_k),\   i \ne j;$
 \item [($I_{i}$)] $(u_1, \ldots, u_i, \ldots, u_k)  \longrightarrow  (u_1,\ldots,u_i^{-1}, \ldots, u_k);$
 \item [($C_{i,w}$)] $(u_1, \ldots, u_i,
\ldots, u_k) \longrightarrow (u_1,\ldots,u_i^{w}, \ldots, u_k ), \
w\in G.$
 \end{enumerate}
Transformations  $R_{ij}, L_{ij}, I_i$ are also called \emph{elementary Nielsen transformations}. A composition of finitely many elementary Andrews-Curtis transformations is called  an  \emph{AC-transformation}. Likewise a composition of elementary  Nielsen transformations is a \emph{Nielsen transformation}.
Clearly AC transformations are invertible. The group they generate is $FAC_k(G)$, the \emph{full Andrews-Curtis group} of $G$ of rank $k$.  To define the original \emph{Andrews-Curtis group} $AC_k(G)$, denote by $N_k(G)$ the set of all $k$-tuples in $G^k$ which generate $G$ as a normal subgroup (here we consider only  such $k$ that $N_k(G) \neq \emptyset$). Again, every elementary AC-transformation induces a bijection on $N_k(G)$ and the subgroup of $Sym(N_k(G))$ generated by these bijections is the AC group of $G$, denoted by $AC_k(G)$. The AC conjecture can be stated as follows: for every free group $F$ of rank $k \geq 2$ the Andrews-Curtis group $AC_k(F)$ acts transitively on the set $N_k(F)$.  The set $N_k(G)$ can be quite complex, for example, it is known that there is no algorithm to decide whether a group given by a finite presentation is trivial or not \cite{Adjan,Rabin}. Whether such an algorithm exists for balanced presentations is an open and difficult problem (see \cite{BMS}). It follows that the set $N_k(F)$ is computably enumerable, but it is not known if it is computable. On the other hand, if $G$ is finitely generated with decidable word problem then the set $G^k$ is computable. In particular, the set $F^k$ is computable. So it might be easier to study the group $FAC_k(G)$ then $AC_k(G)$.  Observe, that the restriction of a bijection $\alpha \in FAC_k(G)$ onto the set $N_k(G)$ gives  a bijection $\bar \alpha \in AC_k(G)$. It is easy to see that the map $\alpha \to \bar \alpha$ gives rise to  an epimorphism
$$
\lambda_{G,k}\colon FAC_k(G) \to AC_k(G).
$$

\medskip
\noindent
{\bf General Problem 1.} {\it 
For a given group $G$ and $k\geq 2$, describe the kernel of the epimorphism
$\lambda_{G,k}\colon FAC_k(G) \to AC_k(G)$.}

\medskip

Below we address this problem for torsion-free hyperbolic groups $G$.

The main result of the paper (proved in Section~\ref{sec:ACG}) is as follows.

\begin{thm}\label{thm:main} Let $G$ be a torsion-free non-elementary hyperbolic group. Then $FAC_k(G)$ acts faithfully on every nontrivial orbit in $G^k$ (the trivial orbit consists of the $k$-tuple $(1,1,\ldots, 1)$).
\end{thm}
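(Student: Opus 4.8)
The plan is to realize $FAC_k(G)$ inside the group of automorphisms of the free product $\Phi = F_k * G$, where $F_k = \langle x_1,\dots,x_k\rangle$ is free of rank $k$. Each elementary AC-move lifts to an automorphism of $\Phi$ that fixes $G$ pointwise: $R_{ij}$ to $x_i\mapsto x_i x_j^{\pm1}$, $L_{ij}$ to $x_i\mapsto x_j^{\pm1}x_i$, $I_i$ to $x_i\mapsto x_i\inv$, and $C_{i,w}$ to $x_i\mapsto x_i^{w}$ (all other $x_\ell$ and all of $G$ fixed). Writing $\mathrm{ev}_{\bar u}\colon\Phi\to G$ for the retraction with $\mathrm{ev}_{\bar u}(x_j)=u_j$ and $\mathrm{ev}_{\bar u}|_G=\mathrm{id}$, one checks that the induced element $\alpha\in FAC_k(G)$ acts by $\alpha(\bar u)_i = \mathrm{ev}_{\bar u}(\hat\alpha(x_i))$, where $\hat\alpha\in\mathrm{Aut}(\Phi)$ is the corresponding composite automorphism and $\bar u=(u_1,\dots,u_k)$. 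Setting $W_i := \hat\alpha(x_i)\in\Phi$, the assertion ``$\alpha$ is the identity of $FAC_k(G)$'' becomes ``$\mathrm{ev}_{\bar v}(W_i)=v_i$ for every $\bar v\in G^k$ and every $i$'', and the hypothesis that $\alpha$ fixes the orbit $\mathcal O$ pointwise becomes the family of evaluation identities $\mathrm{ev}_{\bar u}(W_i)=u_i$ for all $\bar u\in\mathcal O$. Thus the whole statement is reduced to a question about the word maps $W_1,\dots,W_k\in F_k * G$.

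Next I would exploit the richness of a nontrivial orbit. Since $\mathcal O$ is closed under the conjugations $C_{j,w}$, applying them coordinate by coordinate shows $\mathcal O$ contains every tuple $(b_1^{w_1},\dots,b_k^{w_k})$ with $w_j\in G$ arbitrary, where $b^{w}=w\inv b w$. Moreover $\mathcal O$ is closed under the Nielsen moves $R_{ij},L_{ij}$, so starting from a coordinate $u_\ell^0\neq 1$ of the given nontrivial tuple and applying $R_{j\ell}$ to every trivial coordinate, I may replace $\bar u^0$ by a base tuple $\bar b=(b_1,\dots,b_k)\in\mathcal O$ all of whose entries are nontrivial; as $\alpha$ fixes $\mathcal O$ pointwise it still fixes $\bar b$ and all its coordinate conjugates. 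This yields the key identities
\[
\mathrm{ev}_{(b_1^{w_1},\dots,b_k^{w_k})}(W_i)=b_i^{w_i}\qquad\text{for all }(w_1,\dots,w_k)\in G^k .
\]
The crucial point is that although every entry of every tuple in $\mathcal O$ lies in the normal closure $\langle\!\langle \bar b\rangle\!\rangle$, the conjugators $w_j$ here range over all of $G$; it is this freedom, together with the geometry of $G$, that will let me propagate rigidity from $\mathcal O$ to the whole of $G^k$.

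The heart of the argument is to analyze the normal form of each $W_i$ as an alternating product of $F_k$-syllables and constant $G$-syllables, and to feed into the displayed identities conjugators $w_j$ chosen generically. Here I would use the standard facts about a non-elementary torsion-free hyperbolic group $G$: for suitable long generic $w_j$ the conjugates $b_j^{w_j}$ have large translation length and play ping-pong, generating a free subgroup into which the fixed constants of $W_i$ cannot be absorbed; centralizers of nontrivial elements are infinite cyclic, and the centralizer of any non-elementary subgroup is trivial. Running this for independently varying $w_1,\dots,w_k$ forces each $W_i$ to reduce, in $F_k * G$, to a single conjugated generator $g_i\inv x_i g_i$ (no other $x_\ell$ can survive and no surplus constants can remain), and then requiring the identity $\mathrm{ev}(g_i\inv x_i g_i)=b_i^{w_i}$ to hold for \emph{all} $w_i\in G$ pins the conjugator down and forces $W_i=x_i$ in $\Phi$. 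Consequently $\mathrm{ev}_{\bar v}(W_i)=v_i$ for every $\bar v\in G^k$, so $\alpha$ is the identity permutation of $G^k$; this is exactly faithfulness on $\mathcal O$. I expect the main obstacle to be precisely this normal-form control: showing that generic conjugation by elements of $G$ makes the substituted tuple sufficiently ``free'' that no cancellation among the fixed constants of $W_i$ can occur, which is where torsion-freeness (cyclic centralizers, no finite-order degeneracies) and non-elementarity (genuinely free ping-pong subgroups, trivial centralizers of large subsets) are indispensable. Finally, since $G$ is nontrivial, any tuple in $N_k(G)$ is nontrivial, so $N_k(G)$ is a union of nontrivial orbits and the theorem gives that $\lambda_{G,k}\colon FAC_k(G)\to AC_k(G)$ is injective, hence an isomorphism.
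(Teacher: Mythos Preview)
Your setup is exactly the paper's: lift $\alpha$ to words $W_i\in G*F_k$, reduce via Nielsen moves inside the orbit to a base tuple with all entries $b_j\neq 1$, and use closure of the orbit under coordinatewise conjugation to obtain
\[
W_i(b_1^{w_1},\dots,b_k^{w_k})=b_i^{w_i}\qquad\text{for all }w_1,\dots,w_k\in G.
\]
Where you diverge is the endgame. You propose to force $W_i=x_i$ by a direct normal-form/ping-pong analysis with ``generic'' conjugators, and you correctly flag this as the main obstacle; as written it is only a sketch. The paper replaces this with two clean strokes. First, it isolates a standalone result (its Theorem~\ref{th:trivial}, proved via the big powers property and commutative transitivity): if an equation over a non-elementary torsion-free hyperbolic group is satisfied by \emph{every} tuple, then it is already trivial in $G*F_k$. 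Applied with the $w_j$ as variables, your displayed identities are promoted to the single identity $W_i(b_1^{x_1},\dots,b_k^{x_k})=b_i^{x_i}$ in $G*F_k$. Second, the endomorphism of $G*F_k$ that fixes $G$ pointwise and sends $x_j\mapsto b_j^{x_j}$ is injective (an easy free-product normal-form check, using $b_j\neq 1$); since it sends $W_i$ and $x_i$ to the same element, $W_i=x_i$. This is morally the same mechanism your ping-pong intuition is reaching for---big powers is a packaged, quantitative cousin of ping-pong---but it avoids the case analysis you anticipate and makes the torsion-free/non-elementary hypotheses enter at one clearly marked point.
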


\begin{cor}
    Let $G$ be a non-elementary torsion-free hyperbolic group. Then  for any $k \geq 2$ $\lambda_{G,k}\colon FAC_k(G) \to AC_k(G)$ is an isomorphism.
\end{cor}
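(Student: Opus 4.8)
The plan is to deduce the corollary directly from Theorem~\ref{thm:main}. Since the excerpt has already established that $\lambda_{G,k}\colon FAC_k(G)\to AC_k(G)$ is an epimorphism, an isomorphism is obtained as soon as injectivity is shown, and injectivity is equivalent to $\ker\lambda_{G,k}=1$. I would first unwind what the kernel is: an element $\alpha\in FAC_k(G)$ lies in $\ker\lambda_{G,k}$ precisely when its restriction $\bar\alpha\in AC_k(G)$ is the identity bijection on $N_k(G)$, that is, when $\alpha$ fixes every tuple of $N_k(G)$ pointwise. So the whole corollary reduces to the claim that only the identity of $FAC_k(G)$ can fix $N_k(G)$ pointwise.

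To apply Theorem~\ref{thm:main} I would record two elementary facts about $N_k(G)$. First, every elementary AC-transformation preserves the normal closure of a tuple, so $N_k(G)$ is invariant under the $FAC_k(G)$-action and is therefore a union of $FAC_k(G)$-orbits in $G^k$. Second, because $G$ is a non-elementary hyperbolic group it is nontrivial, whence the tuple $(1,\ldots,1)$---which generates the trivial normal subgroup rather than all of $G$---does not belong to $N_k(G)$; thus $N_k(G)$ is disjoint from the trivial orbit $\{(1,\ldots,1)\}$. Since we work only with those $k$ for which $N_k(G)\neq\emptyset$, I may choose some $\bar u\in N_k(G)$, and its orbit $O=FAC_k(G)\cdot\bar u$ is then a \emph{nontrivial} orbit lying entirely inside $N_k(G)$.

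With such an orbit fixed, the conclusion is immediate. Any $\alpha\in\ker\lambda_{G,k}$ fixes $N_k(G)$ pointwise and in particular fixes every point of $O\subseteq N_k(G)$. Theorem~\ref{thm:main} guarantees that $FAC_k(G)$ acts faithfully on the nontrivial orbit $O$, so the only element fixing $O$ pointwise is the identity; hence $\alpha=1$ and $\ker\lambda_{G,k}=1$. An injective epimorphism is an isomorphism, so $\lambda_{G,k}$ is an isomorphism, as claimed.

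I do not expect a genuine obstacle in this deduction: essentially all the difficulty resides in Theorem~\ref{thm:main} itself, and the passage from it to the corollary is routine. The one point meriting care is the verification that $N_k(G)$ actually meets a nontrivial orbit, which here rests solely on $G$ being nontrivial together with the standing hypothesis $N_k(G)\neq\emptyset$. It is worth stressing that no transitivity of the action on $N_k(G)$ (as the Andrews--Curtis Conjecture would require) is invoked: a single nontrivial orbit inside $N_k(G)$ already forces the kernel to vanish, because faithfulness on one nontrivial orbit suffices.
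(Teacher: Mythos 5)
Your proof is correct and is precisely the deduction the paper intends (the paper leaves it implicit after Theorem~\ref{thm:main}): the kernel of $\lambda_{G,k}$ fixes $N_k(G)$ pointwise, $N_k(G)$ is a nonempty union of nontrivial orbits since $(1,\ldots,1)\notin N_k(G)$, and faithfulness on any one such orbit forces the kernel to be trivial. Your added care in checking that $N_k(G)$ is $FAC_k(G)$-invariant and avoids the trivial orbit is exactly the right bookkeeping.
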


Note that in \cite{Rom} Roman'kov independently proved that $\lambda_{G,k}$ is an isomorphism for free nonabelian groups $G = F_k$ of rank $k\geq 2$.
 
\section{Equations}

We require some results about equations over hyperbolic groups. Throughout this section $G$ stands for a non-elementary torsion free hyperbolic group; for example a finitely generated nonabelian free group. Standard references for background on hyperbolic groups are~\cite{G,ABC, CDP, GH}.

We state some well-known properties of $G$ as a lemma.
\begin{lem}\label{lem:hyp} Let $G$ be a torsion-free non-elementary hyperbolic group.
\begin{enumerate}
\item Let $H$ be the centralizer in $G$ of a non-identity elements. Then $H$ is cyclic and malnormal i.e., $H^x\cap H = 1$ if $x \in G-H$.
\item If $F$ is free of finite rank, then the free product $G*F$ is non-elementary torsion-free hyperbolic.
\item $G$ satisfies the big powers property~\cite{O}. In other words for any sequence $v_1,\ldots, v_n \in G$ such that $v_i$ does not commute with $v_{i+1}$ for $1\le i < n$, there exists an integer $m$ such that
\[ v_1^{r_1} \cdots v_n^{r_n} \neq 1 \mbox{ whenever $r_i\ge m$ for all $i$}.\]
\end{enumerate}
\end{lem}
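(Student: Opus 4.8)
The three assertions are standard facts from the geometry of torsion-free hyperbolic groups (see \cite{G,ABC,CDP,GH}), and the plan is to assemble them from the theory of elementary subgroups together with the cited big-powers estimate \cite{O}. Throughout I would fix a word metric and a hyperbolicity constant, and use the elementary closure $E(g)$ of an infinite-order element $g$: the stabilizer of the endpoint pair $\{g^{+\infty},g^{-\infty}\}$ of its axis in $\partial G$, which is the unique maximal virtually cyclic subgroup containing $g$ and which contains $C_G(g)$.

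For part~(1) I would first note that, since $G$ is torsion-free, $E(g)$ is a torsion-free virtually cyclic group, hence infinite cyclic; being then abelian it satisfies $C_G(g)=E(g)$, so $H$ is infinite cyclic, and the same applied to powers gives $C_G(g^n)=C_G(g)$ for $n\neq 0$. This identity shows $H$ is root-closed, hence a maximal cyclic subgroup. For malnormality, suppose $1\neq z\in H\cap H^x$; since $z\in H\cap H^x$ and both $H,H^x$ are abelian, they lie in $C_G(z)$, which is cyclic by the first part. Being maximal cyclic subgroups contained in a cyclic group, both must equal $C_G(z)$, forcing $H=H^x$. Thus $x$ normalizes $H=\langle h\rangle$, i.e.\ $h^x=h^{\pm1}$; the case $h^x=h^{-1}$ gives $x^2\in C_G(h)=H$ and then $h^{2m}=1$, impossible in a torsion-free group, so $h^x=h$ and $x\in C_G(h)=H$, contradicting $x\in G-H$.

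Part~(2) I would obtain from two closure properties: hyperbolicity is preserved under finite free products and $F$ is hyperbolic, so $G*F$ is hyperbolic; by the Kurosh subgroup theorem every finite-order element of a free product is conjugate into a factor, so $G*F$ is torsion-free; and since $G$ already contains a nonabelian free subgroup, $G*F\supseteq G$ is not virtually cyclic, hence non-elementary.

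Part~(3) is the geometric heart and the step I expect to be hardest; I would cite \cite{O}, but sketch the mechanism as follows. If $[v_i,v_{i+1}]\neq1$, then $v_i$ and $v_{i+1}$ have distinct axis-endpoint pairs (equal pairs would force both into a common $E(\cdot)$ and hence commutation), so consecutive axes are transverse. For $m$ large, a product $v_1^{r_1}\cdots v_n^{r_n}$ with all $r_i\ge m$ traces a path that is a local $(\lambda,c)$-quasigeodesic; by the stability (Morse) lemma it is a global quasigeodesic, so its endpoints lie at distance bounded below by a positive linear function of $\sum r_i$, whence the product is nontrivial. The delicate point, which is exactly the content of the cited estimate, is to make transversality quantitative enough that the local-quasigeodesic constants can be chosen uniformly in the exponents $r_i$.
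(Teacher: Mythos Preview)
The paper does not prove this lemma at all: it is introduced with the sentence ``We state some well-known properties of $G$ as a lemma'' and no argument follows, the authors relying entirely on the standard references \cite{G,ABC,CDP,GH} and \cite{O}. Your proposal therefore supplies strictly more than the paper does, and the sketches you give are the standard ones and are correct in outline.

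One small wrinkle in your argument for part~(1): from $h^x=h^{-1}$ and $x^2=h^m\in H$ you correctly deduce $h^{2m}=1$, but in a torsion-free group this only forces $m=0$, not an immediate contradiction; you need one more line, namely $x^2=1$ hence $x=1\in H$, to reach the contradiction with $x\in G\setminus H$. With that adjustment the malnormality argument is complete.
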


Now we consider equations over $G$.

\begin{defi} An equation $E(x_1, \ldots, x_m)$ over $G$ is an element of the free product $G*X$ where $X$ is freely generated by $\{x_1,\ldots, x_m\}$. A tuple $g_1,\ldots g_m$ of elements of $G$ is a solution to $E$ if the unique homomorphism $G*X\to G$ which is the identity on $G$ and sends each $x_i$ to $g_i$ maps $E$ to the identity.
\end{defi}

Each equation $E(x_1, \ldots, x_m)$ may be written as E = $a_0 x_{i_1}^{d_1} a_1 \cdots x_{i_n}^{d_n}a_n$, where the $a_j$'s are elements of $G$, and the $d_j$'s are nonzero integers. Without loss of generality we may assume that if $a_j = 1$ for some $j$ with $0< j < n$ then $x_{i_j} \neq x_{i_{j+1}}$. In addition since conjugation of an equation does not change the solution set, we may suppose $a_0=1$. Thus we have
\begin{equation}\label{eq1}
E = x_{i_1}^{d_1} a_1 x_{i_2}^{d_2} a_2\cdots x_{i_n}^{d_n}a_n \mbox{ with $x_{i_j} \neq x_{i_{j+1}}$ if $a_j=1$.}
\end{equation}

\begin{thm}\label{th:trivial}
Let $E(x_1, \ldots, x_m)$ be an equation over a non-elementary torsion-free word-hyperbolic group $G$. If all $m$-tuples in $G$ are solutions to $E$, then  $E=1$ in $G*X$.
\end{thm}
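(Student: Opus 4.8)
I will prove the contrapositive: assuming $E\neq 1$ in $G*X$, I will exhibit a single $m$-tuple that fails to solve $E$. First dispose of the degenerate case in which no variable occurs in $E$; then $E$ is a reduced element of $G$, and since $E\neq 1$ this element is $\neq 1$, so \emph{every} tuple is a non-solution. Hence I may assume $E$ has the reduced form \eqref{eq1} with $n\ge 1$. Next comes a clean reduction that will eliminate the only annoying ``boundary term.'' Put $p_0=1$ and $p_j=a_1\cdots a_j$. Evaluating at the all-ones tuple gives $E(1,\ldots,1)=a_1\cdots a_n=p_n$, so if $p_n\neq 1$ the tuple $(1,\ldots,1)$ is already a non-solution and we are done. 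Thus I may assume $p_n=1$.

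Now substitute $x_i\mapsto c_i^{N}$ for a tuple $(c_1,\ldots,c_m)$ to be chosen and a large integer $N$. Conjugating each power past the constants to its left and using $p_n=1$, one rewrites
\[
E(c_1^{N},\ldots,c_m^{N})=\beta_1^{N d_1}\beta_2^{N d_2}\cdots\beta_n^{N d_n},\qquad \beta_j:=p_{j-1}\,c_{i_j}\,p_{j-1}^{-1}.
\]
Writing $\gamma_j:=\beta_j^{\,\mathrm{sign}(d_j)}$ and $r_j:=N\abs{d_j}\ge N$, this is a product $\gamma_1^{r_1}\cdots\gamma_n^{r_n}$ of arbitrarily high powers, with \emph{no} trailing constant (this is exactly why the reduction $p_n=1$ was made). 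A one-line computation (conjugate by $p_{j-1}^{-1}$ and note $p_{j-1}^{-1}p_j=a_j$) shows that $\beta_j$ and $\beta_{j+1}$ commute if and only if $c_{i_j}$ commutes with $a_j c_{i_{j+1}}a_j^{-1}$.

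The crux is to choose the $c_i$ so that no two consecutive bases commute. I will take $c_1,\ldots,c_m$ to freely generate a \emph{malnormal} free subgroup $B\le G$ of rank $m$ with $a_j\notin B$ for every $j$ with $a_j\neq 1$; such subgroups exist in a non-elementary hyperbolic group (for instance as groups generated by sufficiently high powers of independent loxodromic elements, built by ping-pong) and, being chosen after the finitely many constants $a_j$ are fixed, can be taken to avoid them. Granting this, suppose $\beta_j,\beta_{j+1}$ commuted for some $j$, i.e.\ $c_{i_j}$ commuted with $w:=a_j c_{i_{j+1}}a_j^{-1}$. If $a_j=1$ this says two (possibly distinct) free generators commute, impossible in $B$. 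If $a_j\neq 1$, then $w\in C(c_{i_j})=\langle z\rangle$, which is cyclic by Lemma~\ref{lem:hyp}(1); writing $c_{i_j}=z^{a}$ with $a\neq 0$ and $w=z^{b}$ gives $w^{a}=c_{i_j}^{\,b}\in B$, while $w^{a}=a_j c_{i_{j+1}}^{\,a}a_j^{-1}\in a_jBa_j^{-1}$ and $w^{a}\neq 1$ since $G$ is torsion-free. Hence $B\cap a_jBa_j^{-1}\neq 1$, forcing $a_j\in B$ by malnormality of $B$, a contradiction. So consecutive bases never commute.

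To finish, the $c_i$ (hence the $\gamma_j$) are now fixed with consecutive terms non-commuting, so Lemma~\ref{lem:hyp}(3) supplies a threshold $M$ depending only on the sequence $\gamma_1,\ldots,\gamma_n$; taking $N\ge M$ yields $\gamma_1^{r_1}\cdots\gamma_n^{r_n}\neq 1$, i.e.\ $E(c_1^{N},\ldots,c_m^{N})\neq 1$. Thus $(c_1^{N},\ldots,c_m^{N})$ is a non-solution, contradicting the hypothesis that every tuple solves $E$, and therefore $E=1$ in $G*X$. I expect the main obstacle to be precisely the construction of the malnormal free subgroup $B$ avoiding the prescribed constants, together with the verification (via cyclicity of centralizers and malnormality) that no two consecutive bases commute; everything else is the bookkeeping of the conjugation identity and a single invocation of the big powers property.
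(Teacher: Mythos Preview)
Your argument is correct and follows the same overall route as the paper: normalize so that $a_1\cdots a_n=1$ (the paper gets this from the hypothesis, you from the contrapositive reduction via the all-ones tuple), rewrite $E$ as a product of conjugated powers $\beta_j^{d_j}$, and then invoke the big powers property once you know consecutive $\beta_j,\beta_{j+1}$ do not commute. The one substantive difference is in how non-commutation is secured. The paper simply asserts that one can choose the $g_{i_j}$ so that $g_{i_j}$ and $a_j g_{i_{j+1}} a_j^{-1}$ do not commute (and in a remark isolates the abstract condition that for any finite set $A$ of nontrivial elements there exists $g$ commuting with none of them). You instead import a malnormal free subgroup $B=\langle c_1,\ldots,c_m\rangle$ avoiding the finitely many nontrivial $a_j$, and deduce non-commutation from malnormality together with cyclicity of centralizers. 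This is heavier machinery than the paper needs, but it has the virtue of handling all consecutive pairs simultaneously and uniformly, including the case $i_j=i_{j+1}$ with $a_j\neq 1$, which the paper's one-line ``clearly we can choose'' leaves to the reader. Both arguments yield the same non-solution $x_i\mapsto c_i^{N}$ for $N$ large.
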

\begin{proof}
We argue by induction on $n$. The cases $n=0,1$ are left to the reader, so we may assume $n\ge 2$.

By our hypothesis, substituting $1$ for all the $x_j$'s yields $a_1\cdots a_n= 1$ in $G$. It follows that we may rewrite Equation~(\ref{eq1}) as a product of conjugates of powers.
\begin{equation}\label{eq2}
E = x_{i_1}^{d_1} a_1x_{i_2}^{d_2}a_1^{-1} a_1a_2x_{i_3}^{d_3}(a_1a_2)^{-1}\cdots
(a_1\cdots a_{n-1})x_{i_n}^{d_n}(a_1\cdots a_{n-1})^{-1}
\end{equation}
Define $u_1(x_{i_1})=x_{i_1}^{d_1}$, $u_2(x_{i_2})=a_1x_{i_2}^{d_2}a_1^{-1} $u
$u_3(x_{i_3})=a_1a_2x_{i_3}^{d_3}(a_1a_2)^{-1} $, etc. The group $G$ satisfies the big powers condition (see Lemma~\ref{lem:hyp}) hence for substitutions of elements $g_{i_j}^{r_j}\in G$ for $x_{i_j}$ either
\[E( g_{i_1}^{r_1}, g_{i_2}^{r_2}, \ldots, g_{i_n}^{r_n})\neq 1
\]
for all  sufficiently large integers $r_j$ or for some $ j < n$, $u_j(g_{i_j})$ and $u_{j+1}(g_{i_{j+1}})$ commute. In the first case there are many tuples $g_{i_1}^{r_1}, g_{i_2}^{r_2}, \ldots, g_{i_n}^{r_n}$ which are not solutions to our equation $E$, so we may assume that for some $ j < n$, $u_j(g_{i_j}^{r_j})$ and $u_{j+1}(g_{i_{j+1}}^{r_{j+1}})$ commute.

In other words
\[(a_1\cdots a_{j-1})g_{i_j}^{r_jd_j}(a_1\cdots a_{j-1})^{-1}\]
commutes with
\[(a_1\cdots a_{j})g_{i_{j+1}}^{r_{j+1}d_{j+1}}(a_1\cdots a_{j})^{-1}\]
whence $g_{i_j}^{r_jd_j}$ commutes with
$a_{j}g_{i_{j+1}}^{r_{j+1}d_{j+1}}a_{j}^{-1}$.
By Lemma~\ref{lem:hyp}, the group $G$ is commutative transitive (since all proper centralizers are commutative) hence $g_{i_j}$ and $a_jg_{i_{j+1}}a_j^{-1}$ commute. But clearly we can choose $g_{i_j} \in G$ such that $g_{i_j}$ and $a_jg_{i_{j+1}}a_j^{-1}$ do not commute.  Thus there is a substitution $x_i \to g_i^{r_i}$ which does not yield $1$ in the equation $E =1$.
 \end{proof}

\begin{rem}
    Theorem \ref{th:trivial}  shows that, in terms of algebraic geometry over groups, the radical of the affine space $G^n$ for non-elementary torsion-free hyperbolic groups $G$ is trivial.
\end{rem}

\begin{rem}
    The proof of Theorem \ref{th:trivial} shows that the result holds for any group $G$ that satisfy the following conditions:
    \begin{enumerate}
        \item [1)] $G$ is CSA, i.e., centralizers of non-trivial elements are abelian and malnormal; there are many such groups which are not hyperbolic  (see, for example,  \cite{MR,GKM,GL})
        \item [2)] $G$ satisfies the  big powers condition (see examples in \cite{O,KMS});
        \item [3)] For any finite subset of non-trivial elements $A \subseteq G$,  there is an element $g \in G$ such that $[a,g] \neq 1$ for every $a\in A$.
    \end{enumerate}
\end{rem}

\section{Proof of Theorem~\ref{thm:main}}\label{sec:ACG}

{\bf Notation.} When the value of $k$ is irrelevant, we will write $FAC(G)$ in place of $FAC_k(G)$. In addition we will abbreviate $(u_1,\ldots,u_k)$ to $\vec u$.

Notice that for each $\alpha\in FAC(G)$,
$\alpha(\vec u)$ is computed by a fixed sequence of elementary $AC$-moves on $\vec u$. Performing the same sequence on a tuple of indeterminants $\vec x =(x_1,\ldots, x_k)$ yields group words $(W_1, \ldots, W_k)$ in the free product $G*X$. Here $X$ is the free group over $\{x_1,\ldots, x_k\}$.  We record this observation as a lemma.

\begin{lem}\label{lem:Nielsen}
For every $\alpha\in FAC(G)$ there are group words
$W_i(x_1,\ldots, x_k)$ for $1\le i \le k$ over indeterminants
$x_1,\ldots, x_k$, such that
\[\alpha(u_1,\ldots, u_k) = (W_1(u_1,\ldots, u_k ), \ldots, W_k(u_1,\ldots, u_k)).\]\end{lem}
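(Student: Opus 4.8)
The plan is to induct on the length of a representation of $\alpha$ as a product of elementary AC-moves. By definition $\alpha \in FAC(G)$ is represented by a finite sequence $\tau_1, \ldots, \tau_s$ of elementary AC-moves (the inverse of each elementary move is again an elementary move, so no separate generators are needed), and $\alpha(\vec u)$ is obtained by applying $\tau_1, \ldots, \tau_s$ to $\vec u$ in order. I take $s$ as the induction parameter.

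First I would record the relevant closure property: the set of tuples $(V_1, \ldots, V_k)$ with entries in $G*X$ is stable under every elementary AC-move, since each move alters a single entry by one of $V_i \mapsto V_i V_j^{\pm 1}$, $V_i \mapsto V_j^{\pm 1} V_i$, $V_i \mapsto V_i^{-1}$, or $V_i \mapsto w^{-1} V_i w$ with $w \in G \subseteq G*X$, and each result again lies in $G*X$. Hence applying $\tau_1, \ldots, \tau_s$ to the tuple of indeterminates $\vec x = (x_1, \ldots, x_k)$ produces a tuple $(W_1, \ldots, W_k) \in (G*X)^k$; these are the words named in the statement.

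The substance of the proof is the compatibility of this symbolic computation with substitution. Fix $\vec u$ and let $\phi \colon G*X \to G$ be the homomorphism that is the identity on $G$ and sends $x_i \mapsto u_i$, so that $\phi(W_i) = W_i(u_1, \ldots, u_k)$. The key point is that each elementary move $\tau$ commutes with applying $\phi$ entrywise: writing $(V_1', \ldots, V_k') = \tau(V_1, \ldots, V_k)$, one has $\tau(\phi(V_1), \ldots, \phi(V_k)) = (\phi(V_1'), \ldots, \phi(V_k'))$, because $\tau$ modifies an entry only through multiplication, inversion, and conjugation by a fixed $w \in G$, all of which are preserved by the homomorphism $\phi$ (with $\phi(w) = w$). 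Iterating this naturality across $\tau_1, \ldots, \tau_s$ and using $\phi(\vec x) = \vec u$ yields
\[\alpha(\vec u) = (\tau_s \circ \cdots \circ \tau_1)(\phi(x_1), \ldots, \phi(x_k)) = (\phi(W_1), \ldots, \phi(W_k)) = (W_1(\vec u), \ldots, W_k(\vec u)),\]
which is the asserted identity.

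This is essentially a bookkeeping lemma, so I do not anticipate a genuine obstacle. The only step requiring real attention is the compatibility claim, and within it the conjugation move $C_{i,w}$: it is crucial that $w$ is a fixed constant of $G$, so that it is represented by the literal word $w^{-1} x_i w$ over $G*X$ and passes through $\phi$ unchanged. This is precisely why the $W_i$ must be taken in $G*X$ rather than in the free group $X$ alone.
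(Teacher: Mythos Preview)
Your argument is correct and matches the paper's approach: the paper simply records the lemma as an observation, noting that applying the same sequence of elementary AC-moves to the tuple of indeterminates $\vec x$ yields words $W_i \in G*X$, leaving the compatibility with substitution implicit. Your explicit naturality argument via the homomorphism $\phi\colon G*X \to G$ is exactly the detail one would fill in, and your remark about the conjugation move $C_{i,w}$ requiring $w\in G$ (hence $W_i \in G*X$ rather than $X$) is on point.
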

Let $G$ be a torsion-free non-elementary hyperbolic group, and suppose $\alpha \in FAC(G)$ fixes all elements in the orbit of $\vec u = (u_1,\ldots, u_k)$. Without loss of generality we may assume $u_i\ne 1$ for all $i$. It suffices to show that if $\alpha \in FAC(G)$ fixes all conjugates of $\vec u$, i.e., all sequences $(u_1^{h_1}, \ldots, u_k^{h_k})$ as the $h_i$'s run over all elements of $G$, then $\alpha = 1$.

By Lemma~\ref{lem:Nielsen} there are group words $W_i(x_1,\ldots, x_k)$ over $G*X$ such that
\[\alpha(v_1,\ldots, v_k) = (W_1(v_1,\ldots, v_k ), \ldots, W_k(v_1,\ldots, v_k))\]
for all $\vec v\in G^k$. By hypothesis $W_i(u_1^{d_1}, \ldots, u_k^{d_k})= u_i^{d_i}$
for all $d_1,\ldots, d_k\in G$.

Since $G*X$ is non-elementary torsion-free hyperbolic, it follows from Theorem~\ref{th:trivial} that
\begin{equation}\label{eq:3} W_i(u_1^{x_1},\ldots, u_k^{x_k}) = u_i^{x_i}
\mbox{ in $G*X$}.
\end{equation}
By properties of free products, there is an endomomorphism $f\colon G*X\to G*X$ which is the identity on $G$ and maps each $x_i$ to $u_i^{x_i}$.  A straightforward argument shows that $f$ is injective. Hence Equation~\ref{eq:3} implies $W_i(x_1,\ldots, x_k) =x_i$. Thus $\alpha = 1$ as desired.

\section{Open Problems}

Here we collect some open problems on AC-groups $AC_k(G)$. Some of them are new, others appear in various presentations, preprints, or papers.

\medskip
\noindent
{\bf General Problem 2.} {\it Study groups $ FAC_k(G)$ and $AC_k(G)$ for different platform groups $G$. 
}

\medskip

More particular problems are listed below.
 \begin{prob}
 Which groups $AC(G)$ are finitely presentable?
 \end{prob}
  Note, that in \cite{Rom} Roman'kov showed that the group $AC_2(F_2)$, where $F_2$ is a free group of rank 2, is not finitely presented. This brings the following question for free groups $F_k$ of rank $k$.
  \begin{prob}
      Is it true that the groups $AC_k(F_k)$ are finitely presented for $k \geq 3$?
  \end{prob}
 \begin{prob}
 Does the conclusion of Theorem~\ref{thm:main} hold for partially commutative groups?
 \end{prob}
  \begin{prob}\label{prob:normal}
 Find ``good" (quasi-geodesic) normal forms of elements in $AC_k(F_k)$
 \end{prob}
 A solution to Problem~\ref{prob:normal} will enhance efficacy of search for counterexamples to the ACC.s
 \begin{prob}
 For which $k$ does the group $AC_k(F_k)$ have Kazhdan property (T)?
 \end{prob}
Positive solution to this problem would explain why the analog of the product replacement algorithm for generators of normal subgroups in black-box groups works rather well (see \cite{BKM}).

\bibliographystyle{plain}
\bibliography{refs}
\end{document}